\documentclass[11pt, oneside]{amsart}
\usepackage[margin=1in]{geometry}
\geometry{letterpaper}
\usepackage[T1]{fontenc}
\usepackage{graphicx}		
\usepackage{amssymb, amsthm, amsmath,amsfonts, hyperref}
\usepackage {tikz}
\usetikzlibrary {positioning}
\definecolor {processblue}{cmyk}{0.96,0,0,0}

\usepackage{todonotes}
\usepackage{comment}

\numberwithin{equation}{section}

\theoremstyle{plain}
\newtheorem{theorem}{Theorem}
\newtheorem{proposition}[theorem]{Proposition}
\newtheorem{corollary}[theorem]{Corollary}

\newtheorem{lemma}[theorem]{Lemma}

\theoremstyle{definition}
\newtheorem{remark}[theorem]{Remark}
\newtheorem{example}[theorem]{Example}

\newcommand{\CC}{\mathbb{C}}

\newcommand{\PP}{\mathbb{P}}
\newcommand{\ds}{\displaystyle}

\newcommand{\Mbar}{\overline{M}}
\newcommand{\ue}{{\mathrel{\cdot\mkern-3mu{-}\mkern-3mu\cdot}}}
\newcommand{\psinode}[2]{{\psi_{#1,#2}}}
\newcommand{\codim}{\operatorname{codim}}
\newcommand{\nodelabel}{\bullet}

\title[Products of boundary classes on $\Mbar_{0,n}$]{Products of boundary classes on $\Mbar_{0,n}$ via balanced weights}
\author{Maria Gillespie}
\address{Maria Gillespie, Department of Mathematics, Colorado State University, Fort Collins, CO, USA}
\email{\href{mailto:Maria.Gillespie@colostate.edu}{maria.gillespie@colostate.edu}}
\thanks{The first author was partially supported by NSF DMS award number 2054391.}

\author{Jake Levinson} 
\address{Jake Levinson, Département de mathématiques et de statistique, Université de Montréal, Montréal, QC, Canada} 
\email{\href{mailto:jake.levinson@umontreal.ca}{jake.levinson@umontreal.ca}} 
\thanks{The second author was partially supported by NSERC Discovery Grant RGPIN-2021-04169.} 

\begin{document}
\maketitle

\begin{abstract}
In this note, we give a simple closed formula for an arbitrary product, landing in dimension $0$, of boundary classes on the Deligne--Mumford moduli space $\Mbar_{0,n}$. For any such boundary strata $X_{T_1}, \ldots, X_{T_\ell}$, we show the intersection product $\int \prod_{i=1}^\ell [X_{T_i}]$ is either a signed product of multinomial coefficients, or zero, and provide a simple criterion for determining when it is nonzero.

We do not claim originality for our product formula, but to our knowledge it does not appear elsewhere in the literature.    
\end{abstract}

\section{Introduction}

Moduli spaces of curves, in particular the Deligne-Mumford-Knudsen moduli spaces $\overline{M}_{g, n}$ of stable curves, are of central interest in algebraic geometry. Of particular interest are enumerative calculations, and calculations in cohomology, involving loci of special curves and their classes.

In genus zero, the cohomology ring $H^*(\overline{M}_{0, n})$ is particularly straightforward to describe: Keel showed \cite{keel1992} that it is generated by the (Poincar\'e duals of) classes of \emph{boundary strata} $[X_T]$ indexed by certain trees, which stratify the moduli space according to the topological type of the curve. The intersection of such strata is always another stratum or empty, and when the intersection has the expected dimension, it is transverse in the sense of intersection multiplicities, that is, $[X_T] \cdot [X_{T'}] = [X_T \cap X_{T'}]$.

Keel's presentation includes additional linear relations which allow one to reduce to the transverse case in computations. This leaves implicit the question of the degrees of intersection products of non-transverse strata, such as self-intersection numbers. In fact there is a simple explicit formula (Proposition \ref{prop:balanced-weight-formula}), which we call the ``balanced weights formula'', for an arbitrary product of boundary classes that lands in dimension $0$ in $H^\ast(\Mbar_{0,n})$.

\begin{example}
Writing $D_I \subseteq \overline{M}_{0, n}$ for the divisor on which the marked points $\{i \in I\}$ have collided, we have on $\overline{M}_{0, 15}$,
\[[D_{12}]^2 [D_{345}]^3 [D_{12345678}]^4 [D_{11,12}] [D_{13,14,15}]^2 = (-1)^7 \binom{3}{1,2}\binom{2}{1,1}\binom{3}{1,1,1}[pt] = -36 [pt],\]
where $[pt]$ denotes the cohomology class of a point and $\binom{n}{k_1, \ldots, k_r}$ denotes a multinomial coefficient. This calculation is carried out in Example \ref{exa:balanced-weight}: two of the multinomial coefficients come from edges in a tree, one comes from a vertex.
\end{example}

Although informally known to experts, to our knowledge this formula does not appear explicitly in the literature (a close formula is \cite{yang2010calculating}, Equation 14), and we provide it here for ease of reference along with clear exposition. In this note, we revisit intersections of boundary strata, then state and prove the balanced weights formula.

\section{Setup and nonempty intersections}

Two thorough introductions to this material are in  \cite{cavalieri2016, k:psi}. In general a stable genus $0$ curve consists of a number of $\PP^1$'s joined at nodes in a tree structure, along with $n$ marked points such that each $\PP^1$ contains at least $3$ total nodes and marked points.  Its \textit{dual tree} is the graph whose vertices are the $\PP^1$ components and the marked points, and whose edges are given by incidence.  (See Figure \ref{fig:dual-tree}.)  The dual tree is a leaf-labeled \textit{at least trivalent} tree, meaning that every internal (non-leaf) vertex has degree at least $3$.  Such a tree is called a \textit{stable tree}.

\begin{figure}
    \centering
    \includegraphics[width=15cm]{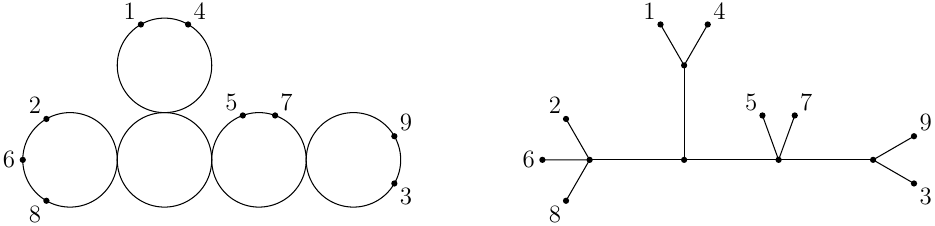}
    \caption{A stable curve in $\Mbar_{0,9}$ and its dual tree.  While we assume we are working over $\CC$, we will draw our figures so that each $\PP^1$ is a circle, representing $\PP^1_\mathbb{R}$.}
    \label{fig:dual-tree}
\end{figure}

We write $\Mbar_{0,S}$ for the moduli space of stable curves of genus $0$ with marked points labeled by the finite set $S$. When $S = \{1, \ldots, n\}$, we write $\Mbar_{0,n}$. For a decomposition $S = A \sqcup B$, we write $D_{A|B}$ for the divisor consisting of stable curves $C$ with a node $q \in C$ separating the marked points in $A$ from those in $B$.

More generally, let $T$ be a stable tree with leaves labeled by $S$. We write $X_T$ for the corresponding closed boundary stratum of $\Mbar_{0,S}$, the closure of the locus of curves with dual tree $T$. Let $v(T)$ and $e(T)$ denote the sets of internal (non-leaf) vertices of $T$ and internal (non-leaf) edges. We have
\begin{equation} \label{eq:stratum-isom-product}
X_T \cong \prod_{v \in v(T)} \Mbar_{0,\deg(v)}.
\end{equation}
By abuse of notation, we write $\Mbar_{0,v}$ for the factor corresponding to $v$. If $e \in e(T)$ is an internal edge, then $T \setminus e$ has two connected components, dividing the marked points into two sets $A_e, B_e$. Write $D_e := D_{A_e|B_e}$. Then
\begin{equation}\label{eqn:complete_intersection}
X_T = \bigcap_{e \in T} D_e, \qquad [X_T] = \prod_{e \in e(T)} [D_e] \in H^*(\Mbar_{0, n}).
\end{equation}
In particular, $X_T$ is a complete intersection and $\codim(X_T) = |e(T)|$. 

\begin{example}\label{ex:tree}
    If $T$ is the tree in Figure \ref{fig:dual-tree}, we have \[X_T=D_{268|134579}\cap D_{12468|3579}\cap D_{14|2356789}\cap D_{39|1245678}.\] 
\end{example}

Conversely, every intersection of boundary divisors is either a boundary stratum $X_T$, or empty. In particular, $X_T \cap D_{A|B}$ is nonempty if and only if there exists $v \in v(T)$ such that the components of $T \setminus v$ form a set partition of $[n]$ that refines $A \sqcup B$.  In this case we say $X_T$ and $D_{A|B}$ are \textit{compatible}. Note that two divisors $D_{A|B}$ and $D_{X|Y}$ are compatible if either $A\subseteq X$, $A\subseteq Y$, $A\supseteq X$, or $A\supseteq Y$.  (Up to interchanging the partitions or the blocks, these four conditions are equivalent.) 

In fact more is true:

\begin{proposition}[\cite{phylogenetics}, Theorem 3.1.4] \label{prop:flag}
   An intersection $X_T\cap X_{T'}$ is nonempty if and only if for every pair of internal edges $e\in T$ and $e'\in T'$, $D_e \cap D_{e'}$ is nonempty.
\end{proposition}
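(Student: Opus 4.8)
The forward implication is immediate: by \eqref{eqn:complete_intersection} we have $X_T \subseteq D_e$ for every internal edge $e \in e(T)$ and $X_{T'} \subseteq D_{e'}$ for every $e' \in e(T')$, so $X_T \cap X_{T'} \subseteq D_e \cap D_{e'}$ for each pair. Thus a nonempty total intersection forces every $D_e \cap D_{e'}$ to be nonempty. The content is the converse, so the plan is to assume $D_e \cap D_{e'} \neq \emptyset$ for all $e \in e(T)$, $e' \in e(T')$ and deduce that the total intersection is nonempty.

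First I would translate the hypothesis into pure combinatorics. Each internal edge $e$ determines a split (bipartition) $A_e \mid B_e$ of the leaf set, and by the compatibility criterion recorded just before the proposition, $D_e \cap D_{e'}$ is nonempty exactly when the two splits are \emph{compatible}: one of $A_e \subseteq A_{e'}$, $A_e \subseteq B_{e'}$, $A_e \supseteq A_{e'}$, $A_e \supseteq B_{e'}$ holds, equivalently one of the four pairwise intersections of $\{A_e, B_e\}$ with $\{A_{e'}, B_{e'}\}$ is empty (the splits do not \emph{cross}). Splits arising from the edges of a single tree are automatically pairwise compatible, so the hypothesis says precisely that the full collection $\mathcal{S} := \{A_e \mid B_e : e \in e(T)\} \cup \{A_{e'} \mid B_{e'} : e' \in e(T')\}$ is pairwise compatible.

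The main step is then to upgrade pairwise compatibility to simultaneous realizability by one tree. By the Splits Equivalence Theorem of phylogenetic combinatorics, a family of splits of a finite set is pairwise compatible if and only if there is a single stable tree $T''$ each of whose internal edges induces one of the given splits. Applying this to $\mathcal{S}$ produces a tree $T''$ displaying every split of $T$ and of $T'$, so each $D_e$ and each $D_{e'}$ occurs among the divisors $\{D_f : f \in e(T'')\}$ whose intersection is $X_{T''}$. Hence by \eqref{eqn:complete_intersection} we obtain $X_{T''} \subseteq \bigcap_{e} D_e \cap \bigcap_{e'} D_{e'} = X_T \cap X_{T'}$, and since the boundary stratum $X_{T''}$ is nonempty, so is $X_T \cap X_{T'}$.

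The crux, and the only nontrivial point, is the Splits Equivalence Theorem, which I would either cite directly (the route suggested by the attribution to \cite{phylogenetics}) or prove by the standard induction: starting from the star tree and inserting the splits of $\mathcal{S}$ one at a time, each new split, being compatible with all previously inserted ones, is realized by subdividing at a single vertex. Alternatively, one can avoid building $T''$ globally and argue directly on $\Mbar_{0,S}$: form $X_T \cap D_{e'_1} \cap \cdots \cap D_{e'_k}$ by adjoining the divisors of $T'$ one at a time, using the fact from the text that every partial intersection is again a boundary stratum or empty. The only way to reach the empty set is for some current stratum to be incompatible with the next divisor, and I would show this forces a crossing pair of splits, contradicting the hypothesis. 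Either way, the real obstacle is the combinatorial lemma that a split compatible with every edge-split of a tree is already refinable at a vertex of the whole tree.
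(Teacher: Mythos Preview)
Your proposal is correct. Your primary route---invoking the Splits Equivalence Theorem to realize the combined split system $\mathcal{S}$ by a single tree $T''$---differs from the paper's, which instead proves directly the lemma you flag at the end as the crux: $X_T \cap D_{X|Y}$ is nonempty if and only if every internal edge-split of $T$ is compatible with $X|Y$ (Lemma~\ref{lem:tree-times-divisor}). The paper establishes this via a two-coloring of the edges of $T$ (blue or red according to which side of $X|Y$ the split sits on), locating a single vertex at which all branches are monochromatic and at which the split $X|Y$ can be inserted. Proposition~\ref{prop:flag} then follows by adjoining the divisors of $T'$ to $X_T$ one at a time---precisely your alternative approach, with the observation that each intermediate stratum has edge-splits drawn from $e(T) \cup e(T')$, so the hypothesis continues to apply. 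Your Splits Equivalence route is cleaner as a citation and builds the common refinement globally; the paper's coloring argument is self-contained and exhibits explicitly where each new edge is grafted.
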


Equivalently, the \textit{boundary complex} of $\overline{M}_{0, n}$ is a \textit{flag complex}.  That is, the simplicial complex whose vertices are the boundary divisors $D_{A|B}$ and whose faces are the strata is \textit{$2$-determined}: a face is in the complex if and only if all of its edges are. This rather special property does not hold for most stratified spaces, such as $\mathbb{P}^n$ with its coordinate hyperplanes. See \cite{giansiracusa} for additional discussion of the `folklore' nature of Proposition \ref{prop:flag} and its usage in phylogenetics. 
For $g > 0$, it is shown in \cite{clader2020boundary} that the boundary complex of $\overline{M}_{g, n}$ is flag if and only if $g \leq 1$ or $n \leq 1$ or $g = n = 2$.

In fact, there is a simple proof in the $g=0$ setting using a graph coloring, which appears in \cite{phylogenetics} and as a special case of the work in \cite{clader2020boundary}, and which we provide a slightly different version of here for completeness. The proof reduces to the following ``edge condition'' for compatibility of $X_T$ with a boundary divisor $D$.  

\begin{lemma}\label{lem:tree-times-divisor}
 The intersection $X_T\cap D_{X|Y}$ is nonempty if and only if, for every internal edge $e\in T$, $D_e$ and $D_{X|Y}$ are compatible.  
 
 Moreover, if it is nonempty, $X_T\cap D_{X|Y}=X_S$ where either $S=T$ or $S$ is formed by separating the branches of $T$ having leaves in $X$ from those in $Y$ by an edge.
\end{lemma}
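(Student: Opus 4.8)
The plan is to prove the biconditional in two directions and then extract the identification of $S$ from the combinatorics of the converse. The forward direction is immediate from \eqref{eqn:complete_intersection}: if $X_T \cap D_{X|Y} \neq \emptyset$, then since $X_T \subseteq D_e$ for every internal edge $e$, we get $\emptyset \neq X_T \cap D_{X|Y} \subseteq D_e \cap D_{X|Y}$, so $D_e$ and $D_{X|Y}$ are compatible for each $e$.

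For the converse I would reformulate everything through a two-coloring of the leaves: color a leaf red if it lies in $X$ and blue if it lies in $Y$. For an internal edge $e$, recall $T \setminus e$ has leaf sets $A_e, B_e$; incompatibility of $D_e$ and $D_{X|Y}$ means all four of $A_e \cap X$, $A_e \cap Y$, $B_e \cap X$, $B_e \cap Y$ are nonempty, so compatibility of $D_e$ with $D_{X|Y}$ is exactly the statement that at least one side of $e$ is \emph{monochromatic}. The goal then becomes to produce a single internal vertex $v$ all of whose branches (components of $T \setminus v$) are monochromatic, as this is precisely the vertex criterion for $X_T \cap D_{X|Y} \neq \emptyset$ recalled above.

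I would obtain such a $v$ by an orientation argument. If some internal edge has both sides monochromatic, then since $X$ and $Y$ are both nonempty we must have $\{A_e, B_e\} = \{X, Y\}$, whence $D_e = D_{X|Y}$, $X_T \subseteq D_{X|Y}$, and $X_T \cap D_{X|Y} = X_T = X_S$ with $S = T$. Otherwise every internal edge has a unique monochromatic side; orient each internal edge toward its other, \emph{mixed} side. Since $T$ is a tree this orientation has no directed cycle, so some internal vertex $v$ has no outgoing internal edge. Every internal edge at $v$ then points toward $v$, so its branch away from $v$ is monochromatic, while every leaf edge at $v$ has a single-leaf branch; hence all branches at $v$ are monochromatic and $v$ separates $X$ from $Y$, giving nonemptiness.

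To finish the \emph{moreover}, note that $X_T \cap D_{X|Y}$, being an intersection of boundary divisors (as $X_T = \bigcap_{e \in e(T)} D_e$), equals some stratum $X_S$; it remains to pin down $S$. In the first case $D_{X|Y}$ already appears among the $D_e$, so $S = T$. In the second, a short check (using $X, Y$ nonempty and that no edge was two-sided monochromatic) shows $v$ has at least two red branches and at least two blue branches, so splitting $v$ into $v_X$ carrying the red branches and $v_Y$ carrying the blue ones, joined by a new edge $f$, produces a stable tree $S$; the new edge satisfies $D_f = D_{X|Y}$ and all old edges retain their partitions, so $e(S) = e(T) \sqcup \{f\}$ and $X_S = X_T \cap D_{X|Y}$ by \eqref{eqn:complete_intersection}. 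The main obstacle is the converse — the combinatorial fact that the absence of a crossing edge forces a single separating vertex; the delicate points are guaranteeing a sink of the orientation and checking at that sink that every branch, including the degenerate one-branch-per-color situations, is truly monochromatic.
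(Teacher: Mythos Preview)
Your proof is correct and takes a genuinely different combinatorial route from the paper's. Both arguments dispatch the forward direction the same way. For the converse, the paper colors \emph{every} edge of $T$ (leaf and internal) according to which block of $X|Y$ the smaller side sits in, then walks along a path from an $X$-leaf to a $Y$-leaf and takes $v$ to be the vertex where the color first changes; proving that all branches at $v$ are monochromatic then requires a small contradiction argument invoking trivalence. Your orientation-and-sink argument is slicker: once you know each internal edge has a unique monochromatic side (Case~2), pointing each edge toward its mixed side and taking a sink of the resulting acyclic orientation immediately produces the desired $v$, with no further case analysis. The paper's dichotomy in the ``moreover'' (exactly one off-color branch vs.\ several) matches your Case~1/Case~2 split exactly.

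One small point: in your ``short check'' that $v$ has at least two branches of each color, invoking ``$X,Y$ nonempty'' is not quite enough when the lone off-color branch is a leaf edge (that case gives $|X|=1$ or $|Y|=1$, which is nonempty); you need $|X|,|Y|\geq 2$, which is of course implicit in $D_{X|Y}$ being a boundary divisor.
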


\begin{proof}
If any $D_e$ is not compatible with $D_{X|Y}$, their intersection is empty, and therefore the entire intersection is empty.   For the other direction, for any internal edge $e\in T$, let $D_e$ correspond to partition $A|B$, with $A,B$ ordered so that either $A\subseteq X$ or $A\supseteq X$.  Then we color edge $e$ blue if $A\subseteq X$ and red if $A\supseteq X$.  Also color the leaf edges blue if the corresponding leaf is in $X$ and red if it is in $Y$. (See Example \ref{ex:colors}.)

Let $p$ be any path in the tree connecting a blue leaf to a red leaf, and define $v$ to be the (internal) vertex preceding the first red edge in $p$. We claim that each branch $\mathcal{B}$ at $v$ is monochromatic.

Choose any branch $\mathcal{B}$ at $v$. Let $e_v$ be the unique edge of $\mathcal{B}$ attached to $v$.  Suppose $e_v$ is blue. Then either the set of leaves $A$ of branch $\mathcal{B}$ satisfies $A\subseteq X$, or the complement $B$ of $A$ has $B\subseteq X$, by the definition of the blue edge coloring.  We show that in fact $A\subseteq X$. This will imply $\mathcal{B}$ is monochromatic (blue): for any other edge $f$ on $\mathcal{B}$, the leaves on the side of $f$ opposite $v$ form a subset $A_f\subseteq A\subseteq X$, so $f$ is also colored blue.


Assume for contradiction that $B\subseteq X$, and consider the red edge $e_v'$ along path $p$ connecting to $v$, and let $w$ be the other vertex of $e_v'$.  The branch $\mathcal{B}'$ at $w$ containing edge $e_v'$ contains $\mathcal{B}$ and at least one more leaf than $A$, because vertex $v$ is at least trivalent.  If $A'$ and $B'$ are the sets of leaves on the sides of $e_v'$ corresponding to $v$ and $w$ respectively, then we have $A'\supset A$ and so $B'\subseteq B\subseteq X$.  But then edge $e'_v$ should be colored blue (since $B'\subseteq X$), a contradiction. 


Finally, suppose instead that edge $e_v$ is red.  Then $A\supseteq X$ and $B\subseteq Y$, and an identical proof as the above shows that the branch $\mathcal{B}$ is monochromatic (red).  This proves the claim.

To finish the proof, we note that if exactly one branch at $v$ is a different color than the others, then $X_T\cap D_{X|Y}=X_T$.  Otherwise, we construct a new tree $S$ by creating a new edge $\varepsilon$ and attaching the blue branches at its left vertex and the red branches at its right, and we have $X_{S}=X_T\cap D_{X|Y}$. 
\end{proof}

\begin{example}\label{ex:colors}
    We compute $X_T\cap D_{124568|379}$ where $T$ is as in Figure \ref{fig:dual-tree}.  Note the coloring of the branches as in the proof above; the resulting tree $S$ is at right.
    \begin{center}
        \includegraphics{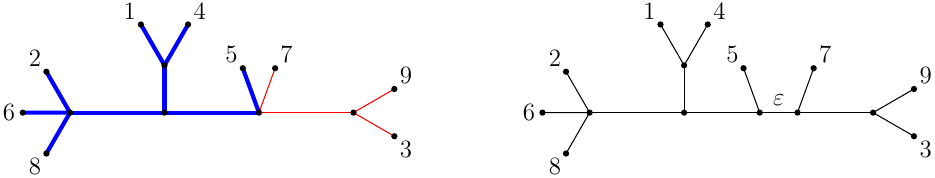}
    \end{center}
\end{example}

Proposition \ref{prop:flag} now follows from Lemma \ref{lem:tree-times-divisor}, by writing a stratum as a product of divisors  as in Equation \eqref{eqn:complete_intersection}.  We also have the following corollary.

\begin{corollary}
For any collection of strata $\{ X_{T_i} \}$, we have $\bigcap X_{T_i} \ne \varnothing$ (and is equal to a boundary stratum $X_S$) if and only if, for all $i, j$, $X_{T_i} \cap X_{T_j} \ne \varnothing$.
\end{corollary}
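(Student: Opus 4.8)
The plan is to deduce the statement from the two-stratum case, Proposition \ref{prop:flag}, together with the complete-intersection description in Equation \eqref{eqn:complete_intersection}. The forward implication is immediate: if $\bigcap_k X_{T_k} \ne \varnothing$, then in particular $X_{T_i} \cap X_{T_j} \supseteq \bigcap_k X_{T_k} \ne \varnothing$ for every pair $i, j$. So the content is the converse, which is precisely the flag ($2$-determined) property for an arbitrary number of strata.

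For the converse, I would first pass from strata to their defining divisors. Writing each $X_{T_i} = \bigcap_{e \in e(T_i)} D_e$ as in Equation \eqref{eqn:complete_intersection}, the total intersection becomes $\bigcap_k X_{T_k} = \bigcap_k \bigcap_{e \in e(T_k)} D_e$, a common intersection of the finite collection of divisors $\mathcal{D} = \{ D_e : e \in e(T_k) \text{ for some } k \}$. Proposition \ref{prop:flag} translates the hypothesis into a statement about this collection: since $X_{T_i} \cap X_{T_j} \ne \varnothing$ for all $i, j$, every pair of divisors in $\mathcal{D}$ is compatible (and any two divisors coming from the same tree $T_k$ are automatically compatible, as $X_{T_k} \ne \varnothing$). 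Thus it remains to show that a collection of pairwise compatible boundary divisors has nonempty intersection equal to a stratum.

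I would establish this last step by induction on $|\mathcal{D}|$, using Lemma \ref{lem:tree-times-divisor} as the inductive engine. The base case is a single divisor. For the inductive step, suppose $Y := \bigcap_{j < m} D_{e_j} = X_S$ is a nonempty stratum; I want $Y \cap D_{e_m} \ne \varnothing$. By Lemma \ref{lem:tree-times-divisor}, this holds provided every internal edge of $S$ is compatible with $D_{e_m}$. The key observation is that the internal edges of $S$ correspond exactly to the divisors containing $X_S$, namely the distinct members of $\{D_{e_j} : j < m\}$; since each of these is compatible with $D_{e_m}$ by hypothesis, Lemma \ref{lem:tree-times-divisor} yields $Y \cap D_{e_m} = X_{S'}$, again a stratum. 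Iterating gives $\bigcap_{D \in \mathcal{D}} D = X_S$, which is exactly $\bigcap_k X_{T_k}$.

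The main obstacle I anticipate is the verification used in the inductive step: that the internal edges of an intermediate intersection $X_S$ are themselves among the original divisors $D_{e_j}$, so that pairwise compatibility is preserved as the induction proceeds. This is what prevents the naive worry that intersecting creates ``new'' divisors incompatible with the remaining ones; it follows from the complete-intersection description $X_S = \bigcap_{f \in e(S)} D_f$ together with the fact that the divisors containing a stratum $X_S$ are precisely the $D_f$ for $f \in e(S)$. Once this is in hand, the corollary is just the iterated form of Proposition \ref{prop:flag}.
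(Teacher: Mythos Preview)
Your argument is correct and matches the paper's intended approach: the paper does not spell out a proof for this corollary, simply presenting it as a consequence of Lemma~\ref{lem:tree-times-divisor} (via the complete-intersection description \eqref{eqn:complete_intersection}) immediately after deriving Proposition~\ref{prop:flag} the same way. Your write-up supplies precisely the details that the paper leaves implicit, including the key observation that at each inductive step the internal edges of the intermediate stratum $X_S$ are exactly the distinct divisors already intersected, so pairwise compatibility with the remaining divisor persists.
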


\subsection{Psi classes and intersection products}

The results below can be found in \cite[Section 2.5.2]{cavalieri2016}.  For $i \in S$, we write $\psi_i \in H^*(\Mbar_{0,S})$ for the \emph{psi class} of the marked point $i$, corresponding to the $i$-th cotangent line bundle $\mathbb{L}_i$. 

For a boundary stratum $X_T$, each edge $e = v \ue v' \in e(T)$ corresponds to two `local' or `half-edge' psi classes: we write $\psinode{v}{e} \in H^*(\Mbar_{0,v})$ and $\psinode{v'}{e} \in H^*(\Mbar_{0,v'})$ for the psi classes associated to $e$ from the $v$ factor and $v'$ factor, marking the node of the curve corresponding to $e$. We identify these classes with their pullbacks in $H^*(X_T)$ along the projections $X_T \to \Mbar_{0,v}$ and $X_T \to \Mbar_{0,v'}$ of Equation \eqref{eq:stratum-isom-product}. 

For a tuple $(k_1, \ldots, k_n)$ such that $\sum k_i = n-3$, we have, by the \textit{string equation},
\begin{equation}\label{eqn:multinomial}
\int_{\Mbar_{0,n}}(\psi_1^{k_1} \cdots \psi_n^{k_n}) = \binom{n-3}{k_1, \ldots, k_n},
\end{equation}
the multinomial coefficient.

Next, let $D = D_{A|B}$ be a boundary divisor. We have $D \cong \Mbar_{0,A \cup \nodelabel} \times \Mbar_{0,B \cup \nodelabel}$, where the bullet $\nodelabel$ refers to the marking for the node. Let $\iota : D \hookrightarrow \Mbar_{0,n}$ be the inclusion. Then
\begin{equation}\label{eqn:D^2}
[D]^2 = -\iota_*(\psinode{A}{\nodelabel} + \psinode{B}{\nodelabel}), 
\quad \text{ or equivalently } \quad
\iota^*[D] = -(\psinode{A}{\nodelabel}+\psinode{B}{\nodelabel})
\end{equation}
where in the formulas above, $\psi_{A,\bullet}$ and $\psi_{B,\bullet}$ denote the `local' (half-edge) psi classes defined above. Similarly, for a boundary stratum $\iota : X_T \hookrightarrow \Mbar_{0,S}$ and an edge $e = v \ue v' \in e(T)$, we have
\begin{equation}\label{eqn:X_T*D_e}\iota^*[D_e] = -(\psinode{v}{e} + \psinode{v'}{e}).
\end{equation}

\section{Self-intersections and a balanced weight formula}

The balanced weights formula is most naturally phrased in terms of products of divisors (Proposition \ref{prop:balanced-weight-formula}). The general case follows since every boundary class is a product of divisor classes (see Remark \ref{rem:products-of-strata}).

We fix a set of divisors whose intersection is nonempty, hence equal to $X_T$ for some stable tree $T$. We write $D_e$ for the divisor corresponding to the edge $e \in e(T)$. Let $\iota : X_T \hookrightarrow \overline{M}_{0,n}$ be the inclusion.

For each $v \in v(T)$, we let $n(v) := \deg(v) - 3$, the dimension of the corresponding $\Mbar_{0,v}$. We fix exponents $k(e)$ for each $e \in e(T)$, and we examine the product
\begin{equation}\label{eq:divisor-product}
\prod_{e \in e(T)} [D_e]^{1+k(e)} = [X_T] \cdot \prod_{e \in e(T)} [D_e]^{k(e)}.
\end{equation}
The condition that this be in dimension $0$ corresponds numerically to the equalities
\begin{equation}\label{eq:dimension-assumption}
\sum_{v \in v(T)} n(v) = \dim X_T = \sum_{e \in e(T)} k(e).
\end{equation}



\begin{lemma}\label{lem:pullback}
We have $\ds{\prod_{e \in e(T)} [D_e]^{1+k(e)} = \iota_*\bigg(
\prod_{e \in e(T)}(-1)^{k(e)}(\psinode{v}{e}+\psinode{v'}{e})^{k(e)}
\bigg)}$.
\end{lemma}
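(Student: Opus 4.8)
The plan is to reduce the statement to the projection formula together with the pullback formula for divisor classes recorded in Equation \eqref{eqn:X_T*D_e}. First I would start from the factorization already written down in Equation \eqref{eq:divisor-product}, namely
\[
\prod_{e \in e(T)} [D_e]^{1+k(e)} = [X_T] \cdot \prod_{e \in e(T)} [D_e]^{k(e)},
\]
and use the fact that $[X_T] = \iota_*(1)$ is the pushforward of the fundamental class of $X_T$ along $\iota$. This lets me apply the projection formula $\iota_*(\iota^*\alpha) = \alpha \cdot \iota_*(1) = \alpha \cdot [X_T]$ with $\alpha = \prod_{e} [D_e]^{k(e)}$, which moves the entire computation into $H^*(X_T)$.

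Next I would compute the pullback in $H^*(X_T)$. By multiplicativity of $\iota^*$,
\[
\iota^*\Big(\prod_{e \in e(T)} [D_e]^{k(e)}\Big) = \prod_{e \in e(T)} \big(\iota^*[D_e]\big)^{k(e)},
\]
and then substitute $\iota^*[D_e] = -(\psinode{v}{e} + \psinode{v'}{e})$ from Equation \eqref{eqn:X_T*D_e} for each edge $e = v \ue v'$. Pulling the sign $(-1)^{k(e)}$ out of each factor and reassembling via the projection formula then yields exactly the claimed expression $\iota_*\big(\prod_e (-1)^{k(e)}(\psinode{v}{e} + \psinode{v'}{e})^{k(e)}\big)$.

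The only point requiring care is the bookkeeping of the half-edge psi classes. Equation \eqref{eqn:X_T*D_e} produces, for each edge, a class on $X_T$ obtained by pulling back $\psinode{v}{e}$ and $\psinode{v'}{e}$ from the appropriate factors of $X_T \cong \prod_{v} \Mbar_{0,v}$ under Equation \eqref{eq:stratum-isom-product}. I would emphasize that all of these classes live in the single ring $H^*(X_T)$, so that the product over edges is genuinely well-defined and multiplicativity of $\iota^*$ applies without issue. Beyond this identification, the lemma is a purely formal consequence of the projection formula and the self-intersection formula \eqref{eqn:X_T*D_e}, so I do not expect any serious obstacle; the substance of the computation has already been packaged into the cited pullback identity.
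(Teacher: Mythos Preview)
Your proposal is correct and follows essentially the same route as the paper's proof: factor out one copy of each $[D_e]$ to obtain $[X_T]$, apply the projection formula to rewrite $[X_T]\cdot \prod_e [D_e]^{k(e)} = \iota_*\iota^*\big(\prod_e [D_e]^{k(e)}\big)$, and then substitute the self-intersection formula \eqref{eqn:X_T*D_e}. Your added remarks on the bookkeeping of half-edge psi classes in $H^*(X_T)$ are accurate but not strictly needed beyond what the paper already set up.
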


\begin{proof}
The first occurrence of each distinct factor gives $[X_T]$. Then by the projection formula (see e.g. \cite[Section 3.3]{Hatcher}) our product has the form
\[
[X_T] \cdot \prod_{e \in e(T)} {D_e}^{k(e)}
= \iota_* \iota^* \bigg( \prod_{e \in e(T)} {D_e}^{k(e)} \bigg).
\]
Combining with the self-intersection formula \eqref{eqn:X_T*D_e} gives the desired form.
\end{proof}

The data $n(v)$, $k(e)$ essentially decorates the tree $T$ with its leaves contracted. Working in the cohomology ring $H^*(X_T)$ of the stratum, we label the edge $e$ with a factor $(-1)^{k(e)}(\psinode{v}{e}+\psinode{v'}{e})^{k(e)}$. See Fig. \ref{fig:decorated_tree}, ``Left'' and ``Middle''.

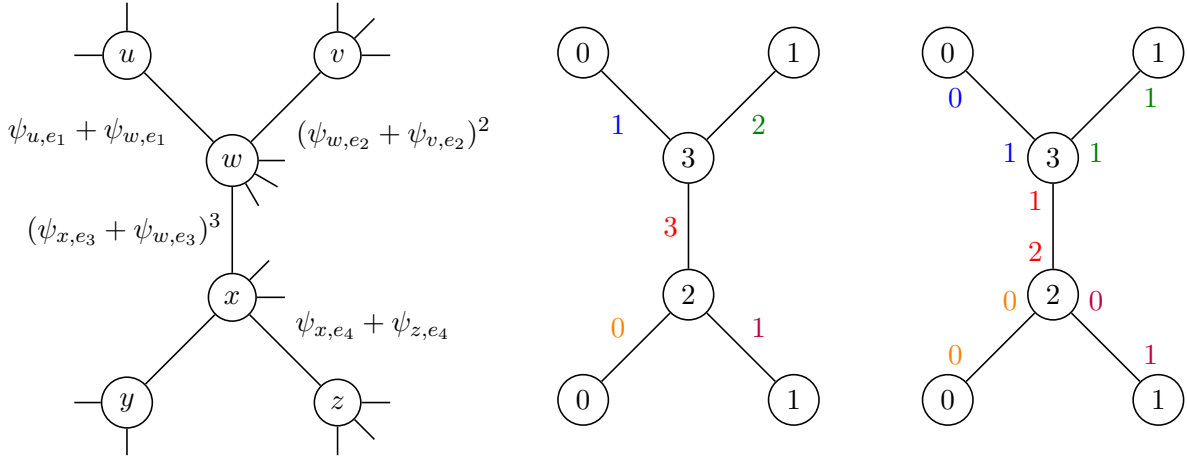
\begin{figure}
\centering
\ \hspace{-1.2cm}
\raisebox{-.4cm}{
\begin {tikzpicture}[semithick, state/.style = {circle, top color = white, draw, text=black},x=1.4cm,y=1.4cm]
\node[state] at (0, 0) (v1) {$y$};
\node[state] at (2, 0) (v2) {$z$};
\node[state] at (1, 1) (v3) {$x$};
\node[state] at (1, 2.3) (v4) {$w$};
\node[state] at (0, 3.3) (v5) {$u$};
\node[state] at (2, 3.3) (v6) {$v$};
\path (v3) edge node[above left] {} (v1);
\path (v3) edge node[above right] {$\psinode{x}{e_4}+\psinode{z}{e_4}$} (v2);
\path (v3) edge node[left] {$(\psinode{x}{e_3}+\psinode{w}{e_3})^3$} (v4);
\path (v4) edge node[below left] {$\psinode{u}{e_1}+\psinode{w}{e_1}$} (v5);
\path (v4) edge node[below right] {$(\psinode{w}{e_2}+\psinode{v}{e_2})^2$} (v6);
\path (v1) edge +(-0.5, 0);
\path (v1) edge +(0, -0.5);
\path (v2) edge +(0.5, 0);
\path (v2) edge +(0, -0.5);
\path (v2) edge +(0.35, -0.35);
\path (v3) edge +(0.35, 0.35);
\path (v3) edge +(0.5, 0);
\path (v4) edge +(0.5, 0);
\path (v4) edge +(0.43, -0.25);
\path (v4) edge +(0.25, -0.43);
\path (v5) edge +(-0.5, 0);
\path (v5) edge +(0, 0.5);
\path (v6) edge +(0.5, 0);
\path (v6) edge +(0.35, 0.35);
\path (v6) edge +(0, 0.5);
\end{tikzpicture}}
\hspace{0.5cm}
\begin {tikzpicture}[semithick, state/.style = {circle, top color = white, draw, text=black},x=1.4cm,y=1.4cm]
\node[state] at (0, 0) (v1) {$0$};
\node[state] at (2, 0) (v2) {$1$};
\node[state] at (1, 1) (v3) {$2$};
\node[state] at (1, 2.3) (v4) {$3$};
\node[state] at (0, 3.3) (v5) {$0$};
\node[state] at (2, 3.3) (v6) {$1$};
\path (v3) edge node[above left] {\color{orange}$0$} (v1);
\path (v3) edge node[above right] {\color{purple}$1$} (v2);
\path (v3) edge node[left] {\color{red}$3$} (v4);
\path (v4) edge node[below left] {\color{blue}$1$} (v5);
\path (v4) edge node[below right] {\color{green!50!black}$2$} (v6);
\end{tikzpicture}
\hspace{1.1cm}
\begin {tikzpicture}[semithick, state/.style = {circle, top color = white, draw, text=black},x=1.4cm,y=1.4cm]
\node[state] at (0, 0) (v1) {$0$};
\node[state] at (2, 0) (v2) {$1$};
\node[state] at (1, 1) (v3) {$2$};
\node[state] at (1, 2.3) (v4) {$3$};
\node[state] at (0, 3.3) (v5) {$0$};
\node[state] at (2, 3.3) (v6) {$1$};
\path (v3) edge node[above left, pos=0.1] {\color{orange}$0$} node[above left, pos=0.9] {\color{orange}$0$} (v1);
\path (v3) edge node[above right, pos=0.1] {\color{purple}$0$} node[above right, pos=0.9] {\color{purple}$1$} (v2);
\path (v3) edge node[left, pos=0.2] {\color{red} $2$} node[left, pos=0.8] {\color{red}$1$} (v4);
\path (v4) edge node[below left, pos=0.1] {\color{blue}$1$} node[below left, pos=0.9] {\color{blue}$0$} (v5);
\path (v4) edge node[below right, pos=0.1] {\color{green!50!black}$1$} node[below right, pos=0.9] {\color{green!50!black}$1$} (v6);
\end{tikzpicture}
    \caption{A decorated tree with leaf labels omitted. {\bf Left}: Factors of $(\psinode{v}{e} + \psinode{v'}{e})^{k(e)}$ are shown. {\bf Middle}: Simplified decoration, showing only vertex weights (dimensions) and edge weights (exponents). {\bf Right:} The unique balanced weights, giving $-36$ times the class of a point (see Ex. \ref{exa:balanced-weight}).
    }
    \label{fig:decorated_tree}
\end{figure}

We continue directly in $H^*(X_T)$. Expanding out the product $\prod_e (\psinode{v}{e}+\psinode{v'}{e})^{k(e)}$ from Lemma~\ref{lem:pullback} gives a sum of monomials in the psi classes, but many such monomials vanish: for each $v$, by dimensionality we cannot take more than $n(v)$ factors of the form $\psinode{v}{e}$. In fact, Equation \ref{eq:dimension-assumption} implies \emph{at most one} monomial contributes, as the following combinatorial lemma shows.
\begin{lemma}\label{lem:balanced-weights}
Let $T$ be a stable tree with vertex and edge weights $n : v(T) \to \mathbb{N}$ and $k : e(T) \to \mathbb{N}$, such that $\sum_{e \in e(T)} k(e) = \sum_{v \in v(T)} n(v)$.

There is at most one choice of decomposition of each edge weight $k(e)$ nonnegatively as $k(v,e) + k(v', e)$ (where $e = v\ue v'$), such that, for all $v \in v(T)$, it holds $n(v) = \sum_{e \text{ incident to } v} k(v, e)$.
\end{lemma}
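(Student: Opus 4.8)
The plan is to prove uniqueness by induction on the number of edges of $T$, pruning one leaf at a time. The tree structure makes the system effectively triangular, so the half-edge weights $k(v,e)$ are forced one at a time; in particular no existence or nonnegativity input is needed for the ``at most one'' claim.

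For the base case, if $T$ has no edges then it is a single vertex, the claimed decomposition is empty, and uniqueness is vacuous. For the inductive step, I would use that any finite tree with at least one edge has a degree-$1$ vertex $v$, incident to a unique edge $e = v \ue v'$. Since $e$ is the only edge at $v$, the vertex equation there reads $n(v) = k(v,e)$, so in \emph{any} valid decomposition the value $k(v,e) = n(v)$ is forced, and hence so is $k(v',e) = k(e) - n(v)$.

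Next I would prune: let $T'$ be $T$ with $v$ and $e$ removed, retaining all other edge weights $k$, and assign to $v'$ the reduced weight $n'(v') := n(v') - (k(e) - n(v))$, leaving the other vertex weights unchanged. A one-line count shows both the total vertex weight and the total edge weight drop by exactly $k(e)$, so the balance condition $\sum_e k(e) = \sum_v n(v)$ is inherited by $T'$, which is again a tree and thus a genuine instance of the lemma. The point is that any valid decomposition of $T$ restricts to a valid decomposition of $T'$: the vertex equations at every $u \neq v'$ are literally unchanged, while the equation at $v'$ becomes the $T'$-equation once the forced contribution $k(v',e)$ of the deleted edge is subtracted, and nonnegativity is preserved since we only discard nonnegative values. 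By the inductive hypothesis this restricted decomposition is unique; together with the forced values on $e$, the decomposition of $T$ is unique.

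The only thing that really needs care is the bookkeeping in the pruning step---checking that the reduced weight $n'(v')$ and the preserved balance make $T'$ a legitimate smaller instance so the induction closes---but this is routine. I would also remark that integrality and nonnegativity of the forced solution are irrelevant to uniqueness and pertain only to \emph{existence} of a decomposition; the forcing argument determines the weights regardless. Equivalently, one can phrase the whole statement as nonsingularity of the vertex-versus-oriented-edge incidence system, which for a tree is triangular with respect to any leaf-to-root ordering.
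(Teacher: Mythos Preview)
Your proof is correct and follows essentially the same approach as the paper: both arguments prune a vertex that is a leaf of the internal tree, observe that the half-edge weights on its unique incident edge are forced, and recurse on the smaller tree with the neighbor's weight reduced. The paper phrases this as a greedy algorithm rather than a formal induction, and omits your linear-algebra remark about triangularity, but the content is the same.
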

\begin{proof}
Proceed greedily: delete all leaves from $T$. Pick any $v \in v(T)$ that is now a leaf, and let $e = v \ue v'$ be the unique edge incident to $v$. Then $k(e)$ must decompose as $\big(k(v,e), k(v',e)\big) = \big(n(v), k(e) - n(v)\big)$. If $n(v) \leq k(e) \leq n(v) + n(v')$, we proceed inductively, deleting $v$ and lowering the weight of $v'$. If not, no valid decomposition exists.
\end{proof}
When it exists, we call the unique weighting $k : \{(v,e) : e \text{ incident to } v\} \to \mathbb{N}$ the {\bf balanced weights for} $T, n, k$. 

\begin{proposition}[Balanced weights formula] \label{prop:balanced-weight-formula}
If balanced weights for $T, n, k$ do not exist, then $\prod_e [D_e]^{1+k(e)} = 0$. Otherwise, we have
$$
\int_{\Mbar_{0,n}} \prod_{e \in e(T)} [D_e]^{1+k(e)} = 
(-1)^{\dim X_T}
\cdot
\prod_{\substack{e \in T \\ e = v \ue v'}} \binom{k(e)}{k(v,e), k(v',e)}
\cdot
\prod_{v \in v(T)} \binom{n(v)}{k(v,e_1), k(v, e_2), \ldots}
$$
where the multinomial coefficient for $v$ ranges over the edges incident to $v$.
\end{proposition}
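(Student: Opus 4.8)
The plan is to start from Lemma~\ref{lem:pullback} and move the entire computation into the cohomology of the stratum $X_T$. Applying $\int_{\Mbar_{0,n}}$ to the identity of Lemma~\ref{lem:pullback} and using that $\iota$ is a closed embedding (so $\int_{\Mbar_{0,n}} \iota_*(\alpha) = \int_{X_T} \alpha$), I can pull the scalar $\prod_e (-1)^{k(e)} = (-1)^{\sum_e k(e)} = (-1)^{\dim X_T}$ out front, the last equality being the dimension assumption~\eqref{eq:dimension-assumption}. This reduces the proposition to evaluating
\[
\int_{X_T} \prod_{e \in e(T)} (\psinode{v}{e} + \psinode{v'}{e})^{k(e)},
\]
and accounts for the sign $(-1)^{\dim X_T}$ immediately.

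Next I would exploit the product decomposition $X_T \cong \prod_{v \in v(T)} \Mbar_{0,v}$ of Equation~\eqref{eq:stratum-isom-product}, under which each local class $\psinode{v}{e}$ is pulled back from the single factor $\Mbar_{0,v}$. Expanding every edge factor by the binomial theorem,
\[
(\psinode{v}{e} + \psinode{v'}{e})^{k(e)} = \sum_{k(v,e) + k(v',e) = k(e)} \binom{k(e)}{k(v,e),\,k(v',e)}\, \psinode{v}{e}^{\,k(v,e)}\, \psinode{v'}{e}^{\,k(v',e)},
\]
turns the integrand into a sum of monomials, each indexed by a choice of nonnegative decomposition $k(e) = k(v,e) + k(v',e)$ for every edge. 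Since integration over a product factors as a product of integrals, a given monomial contributes $\prod_{v} \int_{\Mbar_{0,v}} \prod_{e \text{ incident to } v} \psinode{v}{e}^{\,k(v,e)}$, grouping together, at each vertex, the powers of the psi classes living on its factor.

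The crux is then that each vertex integral is computed by Equation~\eqref{eqn:multinomial}: it equals the multinomial $\binom{n(v)}{k(v,e_1), k(v,e_2), \ldots}$ precisely when $\sum_{e \text{ incident to } v} k(v,e) = n(v)$, and vanishes otherwise for dimension reasons. Hence a monomial survives if and only if its decomposition is balanced at every vertex, and Lemma~\ref{lem:balanced-weights} guarantees at most one such decomposition exists. When it does, collecting the edge binomial coefficients from the expansion together with the vertex multinomials from the integrals produces exactly the stated product, finishing the nonvanishing case.

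Finally, for the vanishing statement I would note that the integrand lies in $H^{2\dim X_T}(X_T)$, the top degree, which is one-dimensional and spanned by the point class since $X_T$ is a product of the smooth projective varieties $\Mbar_{0,v}$. Thus the class equals its own integral times the point class; if no balanced weights exist, every monomial integrates to zero, so the class is already zero in $H^*(X_T)$, and therefore its pushforward $\prod_e [D_e]^{1+k(e)} = \iota_*(\cdots)$ vanishes in $H^*(\Mbar_{0,n})$. I expect this last upgrade---from \emph{the integral vanishes} to \emph{the class vanishes}---to be the only step requiring genuine care, since the proposition asserts an equality of cohomology classes and not merely of degrees; it is the top-degree dimension count that makes it go through.
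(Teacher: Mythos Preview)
Your proof is correct and follows essentially the same route as the paper's: pull out the sign via Lemma~\ref{lem:pullback} and the dimension assumption, expand each $(\psinode{v}{e}+\psinode{v'}{e})^{k(e)}$ binomially, group by vertex, and evaluate using Equation~\eqref{eqn:multinomial} together with the uniqueness from Lemma~\ref{lem:balanced-weights}. For the vanishing case the paper is slightly more direct than your top-degree upgrade: a monomial carrying more than $n(v)$ psi factors at some vertex $v$ is already zero as a class in $H^*(X_T)$, since its $v$-component lies in $H^{>2n(v)}(\Mbar_{0,v})=0$, so no appeal to one-dimensionality of top cohomology is needed.
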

\begin{proof}
Continuing from Lemma \ref{lem:pullback}, note first that the global sign is 
\[
(-1)^{\sum_e k(e)} = (-1)^{\sum_v n(v)} = (-1)^{\dim X_T}.
\]
Next, expand out all the products $(\psinode{v}{e}+\psinode{v'}{e})^{k(e)}$ and collect factors according to their vertex labels. Any monomial in the $\psi$ classes involving more than $n(v)$ factors for any $v$, is zero for dimension reasons. In the remaining monomials, each $v$ receives $\leq n(v)$ factors, hence exactly $n(v)$ since $\sum_e k(e) = \sum_v n(v)$. That is, these monomials come from taking, from each $(\psinode{v}{e} + \psinode{v'}{e})^{k(e)}$, the term $\psinode{v}{e}^{k(v,e)} \psinode{v'}{e}^{k(v',e)}$, using balanced weights $(k(v, e) : e \text{ incident to } v)$. 

Assuming such weights exist, they are unique by Lemma \ref{lem:balanced-weights}. We then have
\begin{align}
\prod_{e \in e(T)}
(\psinode{v}{e}+\psinode{v'}{e})^{k(e)}
&=
\prod_{e \in e(T)}\binom{k(e)}{k(v, e), k(v', e)} 
\cdot \psinode{v}{e}^{k(v, e)} \psinode{v'}{e}^{k(v', e)} \\
&=
\prod_{e \in e(T)}\binom{k(e)}{k(v, e), k(v', e)} 
\cdot
\prod_{v \in v(T)} \prod_{e \text{ incident to } v} \psinode{v}{e}^{k(v, e)}.
\end{align}
Since the weights are balanced, $\sum_{e \text{ incident to } v} k(e, v) = n(v)$. Thus the product of psi classes on $\Mbar_{0,v}$ is the multinomial coefficient by Equation \eqref{eqn:multinomial}. This completes the proof.
\end{proof}

We can use the formula $\binom{n}{k_1,\ldots,k_r}=\frac{n!}{k_1!\cdots k_r!}$ to express Proposition \ref{prop:balanced-weight-formula} as of a ratio of factorials:

\begin{corollary}[Balanced weights ratio]\label{cor:ratio}
With notation as above, we have
\[\int_{\Mbar_{0,n}}\prod_{e \in e(T)} [D_e]^{1+k(e)}=(-1)^{\dim X_T}
\frac{\vphantom{\big|}\prod_v n(v)!\cdot \prod_e k(e)!}{\vphantom{\big|}\prod_{v,e} k(v,e)!^2}.\]
\end{corollary}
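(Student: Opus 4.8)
The plan is to derive Corollary \ref{cor:ratio} directly from Proposition \ref{prop:balanced-weight-formula} by rewriting every multinomial coefficient as a ratio of factorials. First I would note that the corollary is an immediate algebraic consequence of the proposition, so the only work is bookkeeping with the factorials, tracking which terms appear in the numerator and which in the denominator, and in particular verifying the exponent of $2$ on the $k(v,e)!$ terms in the denominator.

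Concretely, I would start from the product formula and substitute $\binom{n}{k_1,\ldots,k_r} = \frac{n!}{k_1! \cdots k_r!}$ into each factor. The edge multinomials $\binom{k(e)}{k(v,e),k(v',e)}$ contribute $\prod_e \frac{k(e)!}{k(v,e)!\, k(v',e)!}$ to the product. The vertex multinomials $\binom{n(v)}{k(v,e_1), k(v,e_2), \ldots}$ contribute $\prod_v \frac{n(v)!}{\prod_{e \text{ incident to } v} k(v,e)!}$. Multiplying these and keeping the sign $(-1)^{\dim X_T}$ unchanged gives the claimed expression, provided the denominators match up as $\prod_{v,e} k(v,e)!^2$.

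The one point requiring a moment of care — and the closest thing to an obstacle, though it is minor — is bookkeeping the denominator. The edge contributions put, for each internal edge $e = v \ue v'$, the two factors $k(v,e)!$ and $k(v',e)!$ in the denominator. The vertex contributions put, for each vertex $v$ and each incident edge $e$, a factor $k(v,e)!$ in the denominator. Since each half-edge term $k(v,e)!$ thus arises exactly once from its edge and exactly once from its incident vertex, the total denominator is $\prod_{v,e} k(v,e)!^2$, where the product ranges over incident pairs $(v,e)$. This confirms the squared factorials and completes the derivation.

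Since this is a purely formal restatement of an already-proven result, I would keep the proof to a single sentence or two, simply observing that expanding each multinomial coefficient in Proposition \ref{prop:balanced-weight-formula} as a ratio of factorials and collecting terms yields the stated expression, with each half-edge factorial $k(v,e)!$ appearing once from the incident edge's binomial and once from the incident vertex's multinomial.
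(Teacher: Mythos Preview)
Your proposal is correct and is exactly the approach the paper takes: the paper simply remarks that substituting $\binom{n}{k_1,\ldots,k_r}=\frac{n!}{k_1!\cdots k_r!}$ into Proposition~\ref{prop:balanced-weight-formula} yields the stated ratio, and does not spell out any further details. Your bookkeeping of the denominator---observing that each half-edge factorial $k(v,e)!$ arises once from its edge's binomial and once from its vertex's multinomial---is the right justification for the square.
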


\begin{example} \label{exa:balanced-weight}
For simplicity we write $D_A$ in place of $D_{A|B}$. On $\Mbar_{0, 15}$, consider the product
\[
D_{12}^2 D_{345}^3 D_{12345678}^4 D_{11,12} D_{13,14,15}^2.
\]
The resulting set-theoretic intersection gives a tree $T$ with $\dim(X_T) = 7$ and $6$ internal vertices $\{u, v, w, x, y, z\}$, of the combinatorial form shown in Fig. \ref{fig:decorated_tree}. In the figure, the leaf labels can be taken to be $1, 2, 3, \ldots, 15$ from left to right and top to bottom.

The resulting balanced weights are also shown. The $w\ue x$ edge contributes a factor of $\binom{3}{1,2} = 3$, the $w \ue v$ edge contributes a factor of $\binom{2}{1,1} = 2$, and  the vertex $w$ contributes a factor of $\binom{3}{1,1,1} = 6$. All other edge and vertex factors are $1$. The global sign is $(-1)^{\dim X_T} = -1$. By Proposition \ref{prop:balanced-weight-formula}, we conclude
\begin{align*}
D_{12}^2 D_{345}^3 D_{12345678}^4 D_{11,12} D_{13,14,15}^2
=
X_T \cdot D_{12} D_{345}^2 D_{12345678}^3 D_{13,14,15}
=
-36 [\mathrm{pt}].
\end{align*}
We could also have used the formula in Corollary \ref{cor:ratio}, which in this case is $$(-1)^7 \frac{(3!\cdot 2!\cdot 1!\cdot 1!\cdot 0!\cdot 0!)(3!\cdot 2!\cdot 1!\cdot 1!\cdot 0!)}{(2!\cdot 1!\cdot 1!\cdot 1!\cdot 1!\cdot 0!\cdot 0!\cdot 0!\cdot 0!)^2}=\frac{-6\cdot 2 \cdot 6 \cdot 2}{2^2}=-36.$$

\end{example}

\begin{remark}[Products of arbitrary strata] \label{rem:products-of-strata}
Let $T_1, \ldots, T_\ell$ be stable trees such that $\ds{\bigcap X_{T_i}}$ is nonempty, hence equal to a stratum $X_T$ for some $T$. Assume $\ds{\sum \codim(X_{T_i}) = \dim \Mbar_{0,n}}$. The intersection product $\ds{\prod [X_{T_i}]}$ is then a class in dimension zero, and is computed by Proposition~\ref{prop:balanced-weight-formula}. Explicitly, for each edge $e \in e(T)$, we let
\[
k(e) := \#\{ i : X_{T_i} \subseteq D_e \} - 1,
\]
that is, $k(e)$ counts how many times the boundary divisor $D_e$ occurs after its first occurrence, when each $X_{T_i}$ is expressed as a complete intersection. Equivalently, if $D_e = D_{A|B}$, then $k(e)$ counts the $T_i$ having an edge dividing the marked points $A$ from the marked points $B$ (after the first occurrence). The vertex weights $n(v)$ and balanced weights for $T, n, k$ are formed as above.
\end{remark}

\begin{remark}[Psi class factors]
The formula extends to products of boundary classes and psi classes, by including $\psi_i$ factors in the balanced weights calculation on $X_T$. The result is again a signed product of multinomial coefficients, or zero.

For instance, starting with the same tree as in the example above, suppose we wish to compute the product $$\psi_4^2\psi_7\cdot D_{12}^2 D_{345} D_{12345678}^3 D_{11,12} D_{13,14,15}^2
=
X_T \cdot D_{12} D_{12345678}^2 D_{13,14,15}\cdot \psi_4\psi_7^2.$$  This modifies the above example by adding edges labeled with the exponents $1$ and $2$ to the leaf edges corresponding to marked points $4$ and $7$, respectively.  The same algorithm then produces at most one splitting of the other edge labels, as shown in Figure \ref{fig:psi_tree}.  The corresponding product of multinomial coefficients is then $$\binom{1}{0,1}\binom{0}{0,0}\binom{2}{2,0}\binom{0}{0,0}\binom{1}{0,1}\cdot \binom{3}{1,2,0,0}\binom{0}{0}\binom{1}{0,1}\binom{2}{2,0,0}\binom{1}{1}\binom{0}{0}=3,$$ and since the number of divisors involved is even, we have the positive intersection product $3[pt]$.
\end{remark}

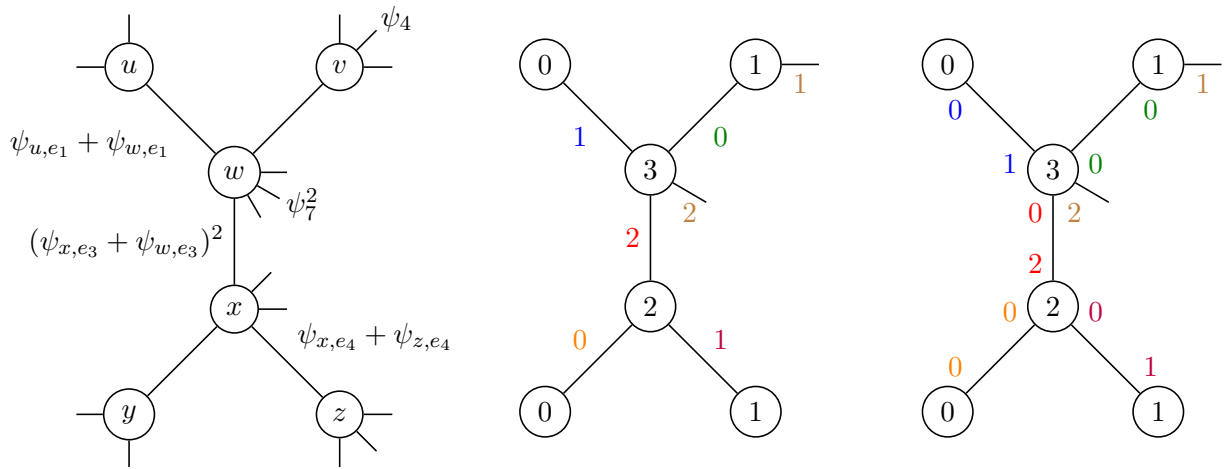
\begin{figure}
\centering
\ \hspace{-1.2cm}
\raisebox{-.4cm}{
\begin {tikzpicture}[semithick, state/.style = {circle, top color = white, draw, text=black},x=1.4cm,y=1.4cm]
\node[state] at (0, 0) (v1) {$y$};
\node[state] at (2, 0) (v2) {$z$};
\node[state] at (1, 1) (v3) {$x$};
\node[state] at (1, 2.3) (v4) {$w$};
\node[state] at (0, 3.3) (v5) {$u$};
\node[state] at (2, 3.3) (v6) {$v$};
\path (v3) edge node[above left] {} (v1);
\path (v3) edge node[above right] {$\psinode{x}{e_4}+\psinode{z}{e_4}$} (v2);
\path (v3) edge node[left] {$(\psinode{x}{e_3}+\psinode{w}{e_3})^2$} (v4);
\path (v4) edge node[below left] {$\psinode{u}{e_1}+\psinode{w}{e_1}$} (v5);
\path (v4) edge node[below right] {} (v6);
\path (v1) edge +(-0.5, 0);
\path (v1) edge +(0, -0.5);
\path (v2) edge +(0.5, 0);
\path (v2) edge +(0, -0.5);
\path (v2) edge +(0.35, -0.35);
\path (v3) edge +(0.35, 0.35);
\path (v3) edge +(0.5, 0);
\path (v4) edge +(0.5, 0);
\path (v4) edge +(0.43, -0.25);
\path (v4) edge +(0.25, -0.43);
\path (v5) edge +(-0.5, 0);
\path (v5) edge +(0, 0.5);
\path (v6) edge +(0.5, 0);
\path (v6) edge +(0.35, 0.35);
\path (v6) edge +(0, 0.5);
\node[right, outer sep=2pt] at (1.35, 2) {$\psi_7 ^2$};
\node[above right, outer sep=2pt] at (2.25, 3.5) {$\psi_4$};
\end{tikzpicture}}
\hspace{0.5cm}
\begin {tikzpicture}[semithick, state/.style = {circle, top color = white, draw, text=black},x=1.4cm,y=1.4cm]
\node[state] at (0, 0) (v1) {$0$};
\node[state] at (2, 0) (v2) {$1$};
\node[state] at (1, 1) (v3) {$2$};
\node[state] at (1, 2.3) (v4) {$3$};
\node[state] at (0, 3.3) (v5) {$0$};
\node[state] at (2, 3.3) (v6) {$1$};
\path (v3) edge node[above left] {\color{orange}$0$} (v1);
\path (v3) edge node[above right] {\color{purple}$1$} (v2);
\path (v3) edge node[left] {\color{red}$2$} (v4);
\path (v4) edge node[below left] {\color{blue}$1$} (v5);
\path (v4) edge node[below right] {\color{green!50!black}$0$} (v6);

\path (v4) edge node[below] {\color{brown} $2$} +(0.53, -0.31);

\path (v6) edge node[below] {\color{brown} $1$} +(0.6, 0);
\end{tikzpicture}
\hspace{1.1cm}
\begin {tikzpicture}[semithick, state/.style = {circle, top color = white, draw, text=black},x=1.4cm,y=1.4cm]
\node[state] at (0, 0) (v1) {$0$};
\node[state] at (2, 0) (v2) {$1$};
\node[state] at (1, 1) (v3) {$2$};
\node[state] at (1, 2.3) (v4) {$3$};
\node[state] at (0, 3.3) (v5) {$0$};
\node[state] at (2, 3.3) (v6) {$1$};
\path (v3) edge node[above left, pos=0.1] {\color{orange}$0$} node[above left, pos=0.9] {\color{orange}$0$} (v1);
\path (v3) edge node[above right, pos=0.1] {\color{purple}$0$} node[above right, pos=0.9] {\color{purple}$1$} (v2);
\path (v3) edge node[left, pos=0.2] {\color{red} $2$} node[left, pos=0.8] {\color{red}$0$} (v4);
\path (v4) edge node[below left, pos=0.1] {\color{blue}$1$} node[below left, pos=0.9] {\color{blue}$0$} (v5);
\path (v4) edge node[below right, pos=0.1] {\color{green!50!black}$0$} node[below right, pos=0.9] {\color{green!50!black}$0$} (v6);

\path (v4) edge node[below left] {\color{brown} $2$} +(0.53, -0.31);

\path (v6) edge node[below] {\color{brown} $1$} +(0.6, 0);
\end{tikzpicture}
    \caption{Computing a mixed product of $\psi$ classes and divisors. The algorithm for finding the balanced weights is the same, where we simply treat the $\psi_4^2$ as weighing the leaf edge connected to leaf number $4$ as having weight $2$, and the  $\psi_7$ edge at leaf $7$ as having weight $1$ .  These $\psi$ edges do not split; the other edges do.
    }
    \label{fig:psi_tree}
\end{figure}

\section{Acknowledgments}

We thank Vance Blankers, Renzo Cavalieri and two referees for helpful discussions pertaining to this work.

\bibliographystyle{alphaurl}
\bibliography{myrefs}

\end{document}